\documentclass[11pt]{amsart}
\usepackage{amssymb, amsmath}
\usepackage{TikZ}

\title{$\mathrm{CS}\mathfrak{X}$-groups and free constructions}
\author{O. Al-Raisi}
\address{O. Al-Raisi: Department of  Mathematics,  College of Science, Sultan Qaboos University, Muscat, Oman}
\email{omartalibmiran@gmail.com}

\author{M. Shahryari}
\address{M. Shahryari: Department of  Mathematics,  College of Science, Sultan Qaboos University, Muscat, Oman}
\email{m.ghalehlar@squ.edu.om}

\newtheorem{lemma}{Lemma}[section]

\numberwithin{equation}{section}

\newcommand{\XT}{\mathfrak{X}\mathrm{T}}
\newcommand{\CSX}{\mathrm{CS}\mathfrak{X}}
\newcommand{\X}{\mathfrak{X}}
\newcommand{\Stab}{\mathrm{Stab}}
\newcommand{\Fix}{\mathrm{Fix}}
\begin{document}

\maketitle
\begin{abstract}
Let $\X$ be a variety of groups. A group $G$ is $\CSX$ if its  maximal $\X$-subgroups are malnormal. We prove that the amalgamated free product of two $\CSX$-groups with a malnormal amalgam and without any involutions is again $\CSX$. Using this, we describe conditions under which  the HNN-extension of a $\CSX$-group  is $\CSX$.
\end{abstract}
\vspace{1cm}

{\bf AMS Subject Classification} 20E06, 20E26, 20F70.\\
{\bf Keywords} malnormal subgroup, amalgamated free product, HNN-extension, variety of groups, $\XT$-group, $\CSX$-group.

\vspace{1cm}


A subgroup $H$ of a given group $G$ is called {\em malnormal} if for every element $x\in G\setminus H$, we have  $H\cap H^x=\{1\}$. A group $G$ is called {\em conjugate separable abelian} ($\mathrm{CSA}$) if all maximal abelian subgroups of $G$ are malnormal.
The class of $\mathrm{CSA}$-groups is broad and plays a significant role in the study of residually free groups, universal theory of non-abelian free groups, limit groups, exponential groups and equational domains in algebraic geometry over groups (see \cite{BMR}, \cite{Champ}, \cite{MR1} , and \cite{MR2}). Another class of groups with very close connections to $\mathrm{CSA}$-groups is the class of $\mathrm{CT}$-groups ({\em commutative transitive groups}). A group is $\mathrm{CT}$ if commutativity is a transitive relation on the set of its non-identity elements. Despite this simple definition, the class of CT groups is also central to the study of residually free groups. Every $\mathrm{CSA}$-group is $\mathrm{CT}$ but the converse is not true.

The second author of the present paper proposed an idea in \cite{Shah} for generalizing the notions of $\mathrm{CSA}$- and $\mathrm{CT}$-groups: Suppose that $\X$ is a class  of  groups. A group $G$  is an   $\XT$-group  if and only if, for any two $\X$-subgroups $A$ and $B$ of $G$, the assumption $A\cap B\neq \{1\}$ implies that $\langle A, B\rangle$ is also an $\X$-subgroup of $G$. Similarly,  a group $G$ is a $\CSX$-group if  all of its maximal $\X$-subgroups  are malnormal. The  general theory of $\XT$- and $\CSX$-groups is developed in \cite{Omar-Shah2} and the reader will find the necessary definition and results there. 

Gildenhuys, Kharlampovich, and Myasnikov obtained conditions under which HNN-extensions and amalgamated free products of $\mathrm{CSA}$-groups are again $\mathrm{CSA}$ (see \cite{Gilden}). In this article,  their results are generalized to free constructions of $\CSX$-groups. We prove the following two results:\\

{\bf Theorem A}. Suppose  $A$ and $B$ are $\CSX$-groups without involutions and $C\leq A, B$ is malnormal in both $A$ and $B$. Then, the amalgamated free product $A\ast_{_C} B$ is $\CSX$. \\

{\bf Theorem B}. Let $\CSX\subseteq \XT$ and $G$ be a $\CSX$-group without any involution. Suppose $A$ and $B$ are two malnormal and separated subgroups of $G$ and $\alpha:A\to B$ is an isomorphism. Then, the HNN-extension 
$$
G^{\ast}_{\alpha}=\langle G, t:\ t^{-1}xt=\alpha(x),\ x\in A\rangle 
$$
is $\CSX$. \\

{\em Separated} in the above theorem means that $A\cap B^g=\{ 1\}$ for all $g\in G$. Note that, in both theorems, we may assume that $\X$ is a proper variety because otherwise the results are trivially true.

For clarity, we introduce the following notation. The subgroup generated by a subset $X$ of a group $G$ will be denoted by $\langle X\rangle$, and a conjugate $a^x$ (or $A^x$) is $x^{-1}ax$ (similarly, $x^{-1}Ax$). A commutator $[x, y]$ is $x^{-1}y^{-1}xy$, but a similar notation $[u, v]$ will be used for the geodesic connecting a vertex $u$ to a  vertex $v$, in the case of trees. The amalgamated free product of two groups $A$ and $B$ with the amalgam $C$ will be denoted by $A\ast_{_C} B$, and the HNN-extension of a group $G$ with the associated subgroups $A$ and $B$ and the isomorphism $\alpha:A\to B$ will be denoted by $G^{\ast}_{\alpha}$. Our arguments are based on Bass-Serre theory and as such, the reader may wish to consult \cite{Dicks} and \cite{Serre}. Before proving the theorems A and B above, we review the (undirected) {\em Bass-Serre trees} of the aforementioned free constructions.

Let $G=A\ast_{_C} B$. Define an undirected graph $T$ as follows: $V_T$, the set of vertices, is the collection of all left cosets of $A$ and $B$ in $G$ i.e.,
$$
V_T=\{ gA, gB:\ g\in G\}.
$$
The set of edges, $E_T$, is the collection of all pairs $\{ gA, gB\}$ with $g$ in $G$, in other words
$$
E_T=\{ \{ gA, gB\}:\ g\in G\}.
$$
According to the main theorem of Bass-Serre theory, this graph is in fact a tree. The group $G$ acts on it by  left multiplication and the vertex and edge stabilizers are
$$
\Stab_G(gA)=gAg^{-1},\ \Stab_G(gB)=gBg^{-1},\ \Stab_G(\{ gA, gB\})=gCg^{-1}.
$$
Similarly, for the HNN-extension 
$$
\Gamma=G^{\ast}_{\alpha}=\langle G, t:\ t^{-1}xt=\alpha(x),\ x\in A\rangle,
$$
the Bass-Serre tree $T$ consists of the set of vertices
$$
V_T=\{ gG:\ g\in \Gamma\}
$$
and the set of edges
$$
E_T=\{ \{gG, gtG\}:\ g\in \Gamma\}.
$$
The group $\Gamma$ acts on $T$ by  left multiplication and we have 
$$
\Stab_{\Gamma}(gG)=gGg^{-1}.
$$
If we consider the special vertex $v^{\ast}=G$ and the special edges $e^{\ast}=\{ G, tG\}$ and $e_{\ast}=\{ G, t^{-1}G\}$, then $V_T=\mathrm{Orb}_{\Gamma}(v^{\ast})$ and $E_T=\mathrm{Orb}_{\Gamma}(e^{\ast})=\mathrm{Orb}_{\Gamma}(e_{\ast})$. In the proof of Theorem B, the stabilizers of edges will be needed. Therefore, we must determine the stabilizers of $e^{\ast}$ and $e_{\ast}$, first.

Suppose $g\in \Gamma$ fixes $e^{\ast}$. As $e^{\ast}$ has no direction, there are two possibilities; either
$$
gG=G\quad \mathrm{and}\quad gtG=tG
$$
or 
$$
gG=tG\quad \mathrm{and}\quad gtG=G.
$$
The second case is impossible by Britton's lemma: $gG=tG$ implies that $g=tg_0$ for some $g_0\in G$, while $gtG=G$ implies $gt\in G$. Hence we must have $tg_0t\in G$, violating Britton's lemma. This shows that 
$$
\Stab_{\Gamma}(e^{\ast})=\{ g\in \Gamma:\ gG=G, \ gtG=tG\}=A.
$$ 
Similarly, we have $\Stab_{\Gamma}(e_{\ast})=B$. Now, suppose $e=\{ xG, xtG\}$ is an arbitrary edge incident to the vertex $v^{\ast}=G$. Then there will be two cases:\\

{\bf $A$-type edge}. In this case, we have $xG=G$, so $x\in G$ and hence $e=\{ G, xtG\}=xe^{\ast}$. For such an edge we have 
$$
\Stab_{\Gamma}(e)=x\Stab_{\Gamma}(e^{\ast})x^{-1}=xAx^{-1}.
$$

{\bf $B$-type edge}. In this case, we have $xtG=G$, so $x=yt^{-1}$ for some $y\in G$. Therefore $e=\{ G, yt^{-1}\}=ye_{\ast}$, and consequently, 
$\Stab_{\Gamma}(e)=yBy^{-1}$. The stabilizers of other edges can be determined using translation. 

\section{Proof of Theorem A}
Let $T$ be the corresponding Bass-Serre tree of $G=A\ast_{_C} B$. We begin with the following lemma.

\begin{lemma}\label{fix-1}
Let $g\in G$ be a non-identity element. Then $g$ fixes at most one edge of $T$. 
\end{lemma}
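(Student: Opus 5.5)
Within the setting of Theorem A ($C$ malnormal in both $A$ and $B$), the plan is to show that two distinct edges cannot both be fixed by a non-identity element. Malnormality of $C$ will force this already for adjacent edges, and the tree structure then propagates it to arbitrary pairs.

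Suppose $g\neq 1$ fixes two distinct edges $e_1\neq e_2$. Since $T$ is a tree and $G$ acts without inversions (vertices of type $A$ and type $B$ are never interchanged), the fixed-point set of $g$ is a subtree. Hence $g$ fixes the whole geodesic joining $e_1$ to $e_2$. In particular $g$ fixes two \emph{adjacent} distinct edges $f_1,f_2$ sharing a vertex $v$. Translating by an element of $G$ (replacing $g$ by a conjugate, which is still non-identity), we may assume $v=A$ or $v=B$; by symmetry take $v=A$.

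The edges incident to $A$ are exactly $\{aA,aB\}=a\{A,B\}$ with $a\in A$. Their stabilizers are $aCa^{-1}$, and $a\{A,B\}=a'\{A,B\}$ iff $aB=a'B$, i.e.\ iff $a^{-1}a'\in A\cap B=C$. So $f_1=a_1\{A,B\}$ and $f_2=a_2\{A,B\}$ with $a_1^{-1}a_2\in A\setminus C$. Then
$$g\in a_1Ca_1^{-1}\cap a_2Ca_2^{-1},$$
and conjugating by $a_1$ gives $1\neq a_1^{-1}ga_1\in C\cap C^{x}$ with $x=(a_1^{-1}a_2)^{-1}$ or its inverse, depending on the convention; in any case $x\in A\setminus C$. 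This contradicts malnormality of $C$ in $A$. The case $v=B$ is identical, using malnormality of $C$ in $B$.

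The main point needing care is the reduction to adjacent edges. I would justify it by the standard fact that if $g$ fixes vertices $u,w$ of a tree then it fixes the unique geodesic $[u,w]$, since $g[u,w]$ is a geodesic with the same endpoints. Taking $u$ an endpoint of $e_1$ and $w$ an endpoint of $e_2$ chosen farthest apart, the path $e_1\cup[u,w]\cup e_2$ is a geodesic fixed by $g$ that contains two consecutive distinct edges. The edges $e_i$ themselves are fixed pointwise because the action has no inversions.
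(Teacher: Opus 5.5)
Your proof is correct and follows the paper's argument: you reduce to two adjacent fixed edges via the fixed geodesic, translate the common vertex to $A$ (or $B$), and contradict malnormality of $C$ through $g\in C\cap aCa^{-1}$ after conjugation. You are slightly more careful than the paper, since you make explicit the absence of inversions and the symmetric case of a common vertex of type $B$; one small wording point is that if $u,w$ are the farthest-apart endpoints, then $[u,w]$ already contains $e_1$ and $e_2$.
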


\begin{proof}
Suppose $e_1$ and $e_2$ are two distinct edges left fixed by $g$. Let $p=e_1f_1\ldots f_ke_2$ be the unique geodesic which connects one end of $e_1$ to one end of $e_2$. As $g$ fixes $e_1$ and $e_2$ and preserves adjacency, it must fix the whole of $p$. So, we may assume that $e_1$  and $e_2$ are adjacent. After a suitable conjugation, we may assume that $e_1=\{ A, B\}$ and $e_2=\{ A, aB\}$, for some $a\in A\setminus C$. Hence, we have
$$
g\in \Stab_G(e_1)\cap \Stab_G(e_2)=C\cap aCa^{-1}.
$$
But, $C$ is malnormal in $A$ and consequently, $g=1$, a contradiction. 
\end{proof}

Now, suppose $M$ is a maximal $\X$-subgroup of $G$. We show that $M$ is malnormal in $G$. Consider the restriction of the action on $M$. There are two cases:\\

{\bf Case 1}. Suppose $M$ fixes some vertex of $T$, say $v=gA$. This implies that $M\subseteq gAg^{-1}$. As $A$ is $\CSX$ and $M$ is a maximal $\X$-subgroup of $gAg^{-1}$ as well, it is malnormal in $gAg^{-1}$. Let $x\in G$ and $1\neq h\in M\cap M^x$. Then $h$ stabilizes the vertices $gA$ and $x^{-1}gA$. Suppose $gA\neq x^{-1}gA$ and consider the geodesic $p$ connecting $gA$ to $x^{-1}gA$. Note that $p$ contains at least two edges and this together with Lemma \ref{fix-1} implies that $h=1$, a contradiction. So $gA=x^{-1}gA$ and therefore, $x\in gAg^{-1}$. Hence if $M\cap M^x\neq \{ 1\}$, then $x\in M$, proving that $M$ is malnormal in $G$.\\

{\bf Case 2}. Suppose $M$ has no fixed vertex. We claim that there is an element $h\in M$ which has no fixed vertex. Suppose on the contrary that every element of $M$ has a fixed point. Let $\Fix_T(h)$ denote the set of all fixed points of $h$. Note that $\Fix_T(h)$ is always a subtree; if $u, v\in \Fix_T(h)$, then $p$, the geodesic connecting $u$ to $v$, will be left fixed by $h$. Now, let $h_1, h_2\in M$. We claim that
$$
\Fix_T(h_1)\cap \Fix_T(h_2)\neq \emptyset. 
$$
This claim is true because of the following lemma which is well-known and the reader can find a proof in \cite{Sahatt}. 

\begin{lemma}\label{Sahatt}
Let $T$ be a tree and $h_1, h_2$ be two automorphisms of $T$. Suppose $h_1$ and $h_2$ have fixed vertices but $\Fix_T(h_1)\cap \Fix_T(h_2)=\emptyset$. Then $h_1h_2$ has no fixed vertex. 
\end{lemma}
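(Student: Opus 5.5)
We prove the contrapositive: if $h_1h_2$ fixes a vertex, then $\Fix_T(h_1)\cap\Fix_T(h_2)\neq\emptyset$. Equivalently, assuming the fixed subtrees $F_1=\Fix_T(h_1)$ and $F_2=\Fix_T(h_2)$ are nonempty and disjoint, we exhibit a line on which $h_1h_2$ acts as a nontrivial translation. As the paper notes, each $F_i$ is a subtree, since the geodesic between two fixed vertices is fixed. Because $F_1$ and $F_2$ are disjoint subtrees of a tree, there is a unique shortest geodesic $[u_1,u_2]$ with $u_1\in F_1$, $u_2\in F_2$, of length $d\geq 1$. Its interior meets neither $F_1$ nor $F_2$, and every path from $F_1$ to $F_2$ passes through it.

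The key step is a local claim at $u_1$. Let $e$ be the first edge of $[u_1,u_2]$. Then $h_1e\neq e$, for otherwise $h_1$ would fix the far endpoint of $e$, which lies in $F_1$ and is closer to $F_2$, contradicting minimality. It follows that the concatenation $[h_1u_2,u_1]\cup[u_1,u_2]$ is a geodesic of length $2d$: its two halves share only $u_1$, because they leave $u_1$ along the distinct edges $h_1e$ and $e$. The symmetric statement holds at $u_2$ for $h_2$.

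Next set $g=h_1h_2$ and build the bi-infinite path
$$\cdots\cup[u_2,u_1]\cup[u_1,h_1u_2]\cup[h_1u_2,h_1h_2u_1]\cup\cdots,$$
obtained by applying $h_1$ and $h_2$ alternately. It is easier to work with the segment $\sigma=[h_2u_1,u_2]\cup[u_2,u_1]$, of length $2d$, and observe that $g\sigma=[h_1u_1,h_1u_2]\cup[h_1u_2,h_1h_2u_1]$, which, since $h_1u_1=u_1$, is the continuation of $\sigma$ through $u_1$. By the local claims, the corners at $u_1$ and at each translate of $u_2$ are non-backtracking. Hence $L=\bigcup_{n\in\mathbb Z}g^n\sigma$ is a path with no backtracking, and therefore a geodesic line in the tree, and $g$ translates $L$ by $2d>0$.

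It remains to see that a translation of a line cannot fix a vertex. Suppose $g$ fixes a vertex $w$, and let $p\in L$ be the projection of $w$ onto $L$. Then $gp$ is the projection of $gw=w$ onto $gL=L$, so $gp=p$, which contradicts translation length $2d>0$. Thus $h_1h_2$ has no fixed vertex.

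The main obstacle is the step verifying that the concatenated path $L$ has no backtracking at every corner. Once the local claim at $u_1$ and $u_2$ is established, the translates of these corners are handled by equivariance. Inversions cause no trouble: we only use fixed vertices, and the projection argument is applied to a vertex $w$.
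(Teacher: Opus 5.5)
The paper gives no proof of this lemma and only refers to the literature, so your self-contained argument is a genuinely different route. It is the classical Serre argument: take the bridge $[u_1,u_2]$ between the disjoint fixed subtrees, show the corners do not backtrack, and exhibit an axis that $h_1h_2$ translates by $2d(F_1,F_2)$. The structure is sound, but the fundamental segment is misdefined.

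With $\sigma=[h_2u_1,u_2]\cup[u_2,u_1]$ and $g=h_1h_2$ one gets $g\sigma=[h_1h_2^2u_1,\,h_1u_2]\cup[h_1u_2,\,h_1h_2u_1]$. Its initial vertex equals $u_1$ only if $h_2^2u_1=u_1$. So in general $\sigma$ and $g\sigma$ do not concatenate, and $\bigcup_n g^n\sigma$ is not a path.

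The fix is to replace $h_2$ by $h_2^{-1}$. Your formula for $g\sigma$ is the correct one for $\sigma=[h_2^{-1}u_1,u_2]\cup[u_2,u_1]$, because $g(h_2^{-1}u_1)=h_1u_1=u_1$. This is also the segment your displayed bi-infinite path actually contains, since $g^{-1}[u_1,h_1u_2]=[h_2^{-1}u_1,u_2]$. With this change the proof closes:
\begin{itemize}
\item $h_2^{-1}$ fixes exactly $F_2$, so your local claim gives $h_2^{-1}f\neq f$ for the edge $f$ of $[u_1,u_2]$ at $u_2$.
\item The corner at $u_1$ uses $h_1e\neq e$, exactly as you say.
\item Equivariance, the fact that a reduced edge path in a tree is a geodesic, and your projection argument finish the proof.
\end{itemize}

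Compared with the bare citation, your proof gives more than the lemma asks. Since $h_1h_2$ translates a line by exactly $2d(F_1,F_2)$, it is hyperbolic in the strict sense, and in particular not an inversion; an automorphism with no fixed vertex could a priori be an inversion. This stronger form is what the paper actually relies on when it passes from ``$h$ has no fixed vertex'' to ``$h$ has an axis'', in Case~2 of Theorem~A and in the proof of Theorem~B.
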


Recall that for every $h\neq 1$, at most one edge is left fixed by $h$, so $|\Fix_T(h)|\leq 2$. Hence
$$
\Fix_T(M)=\bigcap_{h\in M}\Fix_T(h)
$$
is non-empty, a contradiction. As a result, $M$ contains an element $h$ which has no fixed vertex. We show that $M$ is an infinite cyclic group. Note that corresponding to the {\em hyperbolic} element $h$, there is a line (the axis of $h$)
$$
L=\{ \ldots, v_{-1}, v_0, v_1, v_2, \ldots\}
$$
and an integer $r\geq 1$, such that for all $i$ we have $hv_i=v_{i+r}$. So, $h$ acts as a translation on $L$. 

\begin{lemma}\label{inv-1}
The line $L$ is invariant under the action of $M$.  
\end{lemma}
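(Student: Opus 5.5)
The plan is to argue by contradiction, comparing $L$ with its translates. For $m\in M$, the element $h'=mhm^{-1}$ lies in $M$, is again hyperbolic with the same translation length $r$, and its axis is exactly $mL$. So it suffices to show that for any two hyperbolic elements $h,h'\in M$ of equal translation length, their axes coincide. Given $L\neq mL$, there are two cases according to the size of $L\cap mL$.

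\textbf{Infinite intersection.} Suppose $L\cap mL$ is infinite, so it contains a ray $R$ (possibly all of $L$). Then $h$ and $h'$ both translate along $R$ by the same amount $r$.
\begin{itemize}
\item If they translate toward the same end of $R$, then for every vertex $v$ of $R$ far enough from its origin we have $hv=h'v$. Hence $h'^{-1}h$ fixes infinitely many vertices of $R$, and therefore infinitely many edges. By Lemma \ref{fix-1} this forces $h'^{-1}h=1$, so $h=h'$ and their axes agree: $mL=L$.
\item If they translate in opposite directions, the same argument applied to $h'h$ gives $h'h=1$. Thus $h'=h^{-1}$, which has the same axis $L$.
\end{itemize}
Either way $mL=L$, contradicting $L\neq mL$.

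\textbf{Finite intersection.} Suppose $L\cap mL$ is empty or a finite segment. Here I would use the standard ping-pong argument for two hyperbolic automorphisms of a tree whose axes have bounded overlap. For $N$ large, $h^N$ and $h'^N$ generate a free group of rank $2$.
\begin{itemize}
\item Let $p$ be the bridge between the axes, or the common segment if they meet.
\item Define the ping-pong sets as the four half-trees lying beyond the ends of $p$, namely those containing the two ends of $L$ and the two ends of $mL$.
\item Once $Nr$ exceeds the length of $p$ plus a margin, each of $h^{\pm N}$ maps the complement of the half-trees belonging to $L$'s ends into one of them. The same holds for $h'^{\pm N}$ and the half-trees of $mL$.
\end{itemize}
Then $\langle h^N,h'^N\rangle\leq M$ is free of rank $2$. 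But $M$ is an $\X$-group and $\X$ is a proper variety (as we may assume), so $M$ satisfies a nontrivial law, and a free group of rank $2$ satisfies no nontrivial law. This contradiction rules out the finite case.

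Hence $mL=L$ for every $m\in M$, which is the statement of Lemma \ref{inv-1}. I expect the main obstacle to be the ping-pong step: the half-trees and the threshold for $N$ must be set up carefully, distinguishing disjoint axes from axes meeting in a segment. The infinite-intersection case, by contrast, is a direct application of Lemma \ref{fix-1}.
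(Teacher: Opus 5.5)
Your proposal is correct and follows essentially the paper's argument: the same case split on $L\cap mL$, the same half-tree ping-pong giving $\mathbb{F}_2\in\X$ when the overlap is bounded, and Lemma \ref{fix-1} when the axes share a ray. The only variation is in the ray case, where you use that $h'=mhm^{-1}$ has the same translation length as $h$ to get $h'^{-1}h=1$ or $h'h=1$ directly, instead of the paper's route of showing $[h^m, gh^ng^{-1}]=1$ and then noting that commuting hyperbolic elements share an axis.
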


\begin{proof}
Let $g\in M$ and consider the line $gL$. Note that $gL$ is the axis of $h^{g^{-1}}$. There are three possibilities:\\

{\bf i}. Suppose $gL\cap L$ is a segment, say $p=[u, v]$, for some vertices $u, v\in V_T$. Define four rays as follows: Order $L$ and $gL$ linearly by $v_i<v_j$ and $gv_i<gv_j$ iff $i<j$. Assume that $u<v$. Then
$$
L^+=]v, +\infty[=\{ v_i:\ v<v_i\}, \ L^-=]-\infty, u[=\{ v_i:\ v_i<u\}
$$
and define $(gL)^+$ and $(gL)^-$, similarly. 

\begin{figure}[ht]
\centering
\begin{tikzpicture}[
    line width=0.8pt,
    line cap=round,
    line join=round
]

\coordinate (P) at (0,0);
\coordinate (Q) at (3.2,0);

\draw[densely dashed]
    (-2.15,1.85)
    .. controls (-1.95,1.55) and (-1.65,1.12)
    .. (-1.30,0.85);

\draw
    (-1.30,0.85)
    .. controls (-0.85,0.45) and (-0.35,0.05)
    .. (P);

\draw[densely dashed]
    (-2.15,-0.82)
    .. controls (-1.95,-0.65) and (-1.65,-0.42)
    .. (-1.32,-0.27);

\draw
    (-1.32,-0.27)
    .. controls (-0.90,-0.10) and (-0.38,-0.02)
    .. (P);

\draw
    (P)
    .. controls (0.85,-0.07) and (2.35,0.07)
    .. (Q);

\draw
    (Q)
    .. controls (3.72,0.04) and (4.15,0.42)
    .. (4.62,0.82);

\draw[densely dashed]
    (4.62,0.82)
    .. controls (4.83,1.02) and (5.02,1.19)
    .. (5.22,1.34);

\draw
    (Q)
    .. controls (3.72,-0.08) and (4.12,-0.55)
    .. (4.57,-0.83);

\draw[densely dashed]
    (4.57,-0.83)
    .. controls (4.82,-0.94) and (5.02,-1.02)
    .. (5.22,-1.08);

\fill (P) circle (1.7pt);
\fill (Q) circle (1.7pt);

\node at (0.20,0.20) {$u$};
\node at (3,0.20) {$v$};

\node at (-1.65,1.63) {$L^{-}$};
\node at (4.85,1.53) {$L^{+}$};

\node at (-1.20,-0.65) {$(gL)^{-}$};
\node at (4.15,-1.27) {$(gL)^{+}$};

\node at (1.60,-0.27) {$p$};

\end{tikzpicture}
\caption{The intersection of the lines $L$ and $gL$.}
\label{fig:intersection-lines}
\end{figure}

Let $a=h^m$ and $b=gh^ng^{-1}$ for sufficiently large $m$ and $n$. Note that both $a$ and $b$ have infinite orders. Define four subtrees as follows: $T^+_a$ is the connected component of the graph $T\setminus p$ which contains $L^+$, and $T^-_a$ is the component which contains $L^-$. Similarly, define $T^+_b$ and $T^-_b$ as the connected components of $T\setminus p$ which contain $(gL)^+$ and $(gL)^-$, respectively. Clearly, these subtrees are disjoint.
\begin{figure}[htbp]
\centering
\begin{tikzpicture}[
    main/.style={
        draw=black!85,
        line width=0.85pt,
        line cap=round,
        line join=round
    },
    twig/.style={
        draw=blue!32!white,
        line width=0.4pt,
        line cap=round,
        line join=round
    },
    boundary/.style={
        draw=blue!48!white,
        line width=0.65pt,
        dash pattern=on 1.4pt off 2.1pt,
        line cap=round
    }
]


\draw[boundary]
    plot[smooth cycle, tension=0.8] coordinates {
        (0,0)
        (-0.12,0.23)
        (-0.28,0.99)
        (-0.77,1.28)
        (-2.55,1.67)
        (-3.34,1.53)
        (-3.87,1.17)
        (-3.91,0.78)
        (-3.53,0.36)
        (-2.65,0.12)
        (-1.67,-0.05)
    };

\draw[boundary]
    plot[smooth cycle, tension=0.8] coordinates {
        (0,0)
        (-0.18,-0.35)
        (-0.44,-0.96)
        (-0.88,-1.49)
        (-2.61,-1.72)
        (-3.39,-1.65)
        (-3.84,-1.34)
        (-3.91,-0.96)
        (-3.46,-0.51)
        (-2.54,-0.18)
        (-0.55,-0.04)
    };

\draw[boundary]
    plot[smooth cycle, tension=0.8] coordinates {
        (3.6,0)
        (3.74,0.37)
        (4.25,0.94)
        (4.84,1.48)
        (5.60,1.63)
        (6.16,1.38)
        (6.31,1.00)
        (6.05,0.61)
        (5.26,0.23)
        (4.37,0.04)
        (3.98,-0.04)
    };

\draw[boundary]
    plot[smooth cycle, tension=0.8] coordinates {
        (3.6,0)
        (3.80,-0.33)
        (4.11,-0.90)
        (4.73,-1.43)
        (5.52,-1.59)
        (6.11,-1.34)
        (6.27,-0.98)
        (5.92,-0.61)
        (5.09,-0.28)
        (4.17,-0.08)
        (3.78,-0.04)
    };


\begin{scope}
\clip plot[smooth cycle, tension=0.8] coordinates {
     (0,0)
        (-0.12,0.23)
        (-0.28,0.99)
        (-0.77,1.28)
        (-2.55,1.67)
        (-3.34,1.53)
        (-3.87,1.17)
        (-3.91,0.78)
        (-3.53,0.36)
        (-2.65,0.12)
        (-1.67,-0.05)
};

\draw[twig] (-2.53,0.80) -- (-2.39,0.82) -- (-2.23,0.60);
\draw[twig] (-2.39,0.82) -- (-2.42,0.60);
\draw[twig] (-2.39,0.82) -- (-2.25,0.99);

\draw[twig] (-1.94,0.70) -- (-1.80,0.88) -- (-1.81,1.06);
\draw[twig] (-1.80,0.88) -- (-1.64,0.98);
\draw[twig] (-1.80,0.88) -- (-1.66,0.79);

\draw[twig] (-1.42,0.48) -- (-1.32,0.68) -- (-1.15,0.75);
\draw[twig] (-1.32,0.68) -- (-1.35,0.85);
\draw[twig] (-1.32,0.68) -- (-1.16,0.62);

\draw[twig] (-0.91,0.26) -- (-0.84,0.45) -- (-0.96,0.59);
\draw[twig] (-0.84,0.45) -- (-0.68,0.51);

\end{scope}

\begin{scope}
\clip plot[smooth cycle, tension=0.8] coordinates {
    (0,0)
        (-0.18,-0.35)
        (-0.44,-0.96)
        (-0.88,-1.49)
        (-2.61,-1.72)
        (-3.39,-1.65)
        (-3.84,-1.34)
        (-3.91,-0.96)
        (-3.46,-0.51)
        (-2.54,-0.18)
        (-0.55,-0.04)
};

\draw[twig]
    (-1.951,-1.051) -- (-2.04,-0.94);
\draw[twig]
    (-2.04,-0.94) -- (-2.17,-0.92);
\draw[twig]
    (-2.04,-0.94) -- (-2.02,-0.83);
\draw[twig]
    (-2.04,-0.94) -- (-1.91,-0.88);

\draw[twig]
    (-1.697,-0.887) -- (-1.77,-0.765);
\draw[twig]
    (-1.77,-0.765) -- (-1.89,-0.745);
\draw[twig]
    (-1.77,-0.765) -- (-1.75,-0.655);
\draw[twig]
    (-1.77,-0.765) -- (-1.64,-0.705);

\draw[twig]
    (-1.415,-0.709) -- (-1.35,-0.83);
\draw[twig]
    (-1.35,-0.83) -- (-1.44,-0.92);
\draw[twig]
    (-1.35,-0.83) -- (-1.23,-0.86);
\draw[twig]
    (-1.35,-0.83) -- (-1.25,-0.96);

\draw[twig]
    (-0.807,-0.353) -- (-0.75,-0.47);
\draw[twig]
    (-0.75,-0.47) -- (-0.83,-0.56);
\draw[twig]
    (-0.75,-0.47) -- (-0.64,-0.51);
\draw[twig]
    (-0.75,-0.47) -- (-0.64,-0.59);

\end{scope}

\begin{scope}
\clip plot[smooth cycle, tension=0.8] coordinates {
   (3.6,0)
        (3.74,0.37)
        (4.25,0.94)
        (4.84,1.48)
        (5.60,1.63)
        (6.16,1.38)
        (6.31,1.00)
        (6.05,0.61)
        (5.26,0.23)
        (4.37,0.04)
        (3.98,-0.04)
};

\draw[twig]
    (3.997,0.151) -- (4.05,0.39);
\draw[twig]
    (4.05,0.39) -- (3.97,0.48);
\draw[twig]
    (4.05,0.39) -- (4.18,0.44);
\draw[twig]
    (4.05,0.39) -- (4.20,0.33);

\draw[twig]
    (4.228,0.284) -- (4.21,0.70);
\draw[twig]
    (4.21,0.70) -- (4.25,0.79);
\draw[twig]
    (4.21,0.70) -- (4.43,0.76);
\draw[twig]
    (4.21,0.70) -- (4.41,0.64);

\draw[twig]
    (4.470,0.440) -- (4.53,0.26);
\draw[twig]
    (4.53,0.26) -- (4.67,0.15);
\draw[twig]
    (4.53,0.26) -- (4.75,0.21);
\draw[twig]
    (4.53,0.26) -- (4.75,0.29);

\draw[twig]
    (4.721,0.606) -- (4.81,0.42);
\draw[twig]
    (4.81,0.42) -- (4.74,0.31);
\draw[twig]
    (4.81,0.42) -- (4.93,0.48);
\draw[twig]
    (4.81,0.42) -- (4.91,0.26);

\draw[twig]
    (4.977,0.770) -- (5.08,1.05);
\draw[twig]
    (5.08,1.05) -- (5.03,1.27);
\draw[twig]
    (5.08,1.05) -- (5.21,1.28);
\draw[twig]
    (5.08,1.05) -- (5.20,1.27);

\end{scope}

\begin{scope}
\clip plot[smooth cycle, tension=0.8] coordinates {
    (3.6,0) (3.80,-0.33) (4.11,-0.90)
    (4.73,-1.43) (5.52,-1.59) (6.11,-1.34)
    (6.27,-0.98) (5.92,-0.61) (5.09,-0.28)
    (4.17,-0.08) (3.78,-0.04)
};

\draw[twig]
    (3.986,-0.217) -- (4.05,-0.55);
\draw[twig]
    (4.05,-0.55) -- (3.96,-0.63);
\draw[twig]
    (4.05,-0.55) -- (4.17,-0.59);
\draw[twig]
    (4.05,-0.55) -- (4.15,-0.48);

\draw[twig]
    (4.197,-0.368) -- (4.37,-0.29);
\draw[twig]
    (4.37,-0.29) -- (4.20,-0.18);
\draw[twig]
    (4.37,-0.29) -- (4.38,-0.13);
\draw[twig]
    (4.37,-0.29) -- (4.39,-0.02);

\draw[twig]
    (4.410,-0.529) -- (4.48,-0.75);
\draw[twig]
    (4.48,-0.75) -- (4.40,-0.83);
\draw[twig]
    (4.48,-0.75) -- (4.59,-0.89);
\draw[twig]
    (4.48,-0.75) -- (4.59,-0.77);

\draw[twig]
    (4.623,-0.690) -- (5.01,-0.60);
\draw[twig]
    (5.01,-0.60) -- (5.34,-0.49);
\draw[twig]
    (5.01,-0.60) -- (5.42,-0.64);
\draw[twig]
    (5.01,-0.60) -- (5.32,-0.82);

\draw[twig]
    (4.831,-0.839) -- (4.92,-1.2);
\draw[twig]
    (4.92,-1.2) -- (4.86,-1.23);
\draw[twig]
    (4.92,-1.2) -- (5.03,-1.47);
\draw[twig]
    (4.92,-1.2) -- (5.23,-1.56);
\end{scope}


\draw[main]
    (-2.48,1.08)
    .. controls (-1.95,0.77) and (-0.88,0.20)
    .. (0,0);

\draw[main,densely dashed]
    (-3.10,1.48) -- (-2.48,1.08);

\draw[main]
    (-2.15,-1.18)
    .. controls (-1.65,-0.86) and (-0.67,-0.19)
    .. (0,0);

\draw[main,densely dashed]
    (-2.72,-1.58) -- (-2.15,-1.18);

\draw[main]
    (3.6,0)
    .. controls (4.08,0.08) and (4.78,0.72)
    .. (5.43,1.02);

\draw[main,densely dashed]
    (5.43,1.02) -- (5.94,1.23);

\draw[main]
    (3.6,0)
    .. controls (4.09,-0.20) and (4.70,-0.83)
    .. (5.18,-1.04);

\draw[main,densely dashed]
    (5.18,-1.04) -- (5.75,-1.15);


\draw[main]
    (0,0)
    .. controls (0.85,-0.13) and (2.66,0.14)
    .. (3.6,0);


\draw[boundary]
    plot[smooth cycle, tension=0.8] coordinates {
         (0,0)
        (-0.12,0.23)
        (-0.28,0.99)
        (-0.77,1.28)
        (-2.55,1.67)
        (-3.34,1.53)
        (-3.87,1.17)
        (-3.91,0.78)
        (-3.53,0.36)
        (-2.65,0.12)
        (-1.67,-0.05)
    };

\draw[boundary]
    plot[smooth cycle, tension=0.8] coordinates {
        (0,0)
        (-0.18,-0.35)
        (-0.44,-0.96)
        (-0.88,-1.49)
        (-2.61,-1.72)
        (-3.39,-1.65)
        (-3.84,-1.34)
        (-3.91,-0.96)
        (-3.46,-0.51)
        (-2.54,-0.18)
        (-0.55,-0.04)
    };

\draw[boundary]
    plot[smooth cycle, tension=0.8] coordinates {
        (3.6,0)
        (3.74,0.37)
        (4.25,0.94)
        (4.84,1.48)
        (5.60,1.63)
        (6.16,1.38)
        (6.31,1.00)
        (6.05,0.61)
        (5.26,0.23)
        (4.37,0.04)
        (3.98,-0.04)
    };

\draw[boundary]
    plot[smooth cycle, tension=0.8] coordinates {
        (3.6,0) (3.80,-0.33) (4.11,-0.90)
        (4.73,-1.43) (5.52,-1.59) (6.11,-1.34)
        (6.27,-0.98) (5.92,-0.61) (5.09,-0.28)
        (4.17,-0.08) (3.78,-0.04)
    };


\fill (0,0) circle (2pt);
\fill (3.6,0) circle (2pt);

\node at (0.20,0.20) {$u$};
\node at (3.40,0.20) {$v$};

\node at (-1.80,1.34) {$T^-_a$};
\node at (-1.80,-1.38) {$T^-_b$};

\node at (5.50,1.42) {$T^+_a$};
\node at (5.63,-1.36) {$T^+_b$};

\node at (1.80,-0.25) {$p$};

\end{tikzpicture}
\caption{The four connected components.}
\label{fig:two-lines-common-segment}
\end{figure}

Now, for every $k>0$, we have $a^k(T\setminus T^-_a)\subseteq T^+_a$. To see why, let $w$ be a vertex such that $w\not\in T^-_a$. Let $\pi_L(w)$ be the projection of $w$ on $L$, i.e. the closest vertex of $L$ to $w$. Then we have $\pi_L(a^kw)=a^k\pi_L(w)$. On the other hand, we must have $\pi_L(w)\in p\cup L^+$. Hence $a^kw\in T^+_a$ for sufficiently large $m$ depending only on the length of $p$. Similarly, we have $a^{-k}(T\setminus T^+_a)\subseteq T^-_a$, as well as the inclusions
$$
b^k(T\setminus T^-_b)\subseteq T^+_b, \ b^{-k}(T\setminus T^+_b)\subseteq T^-_b
$$
for sufficiently large $n$. By the ping-pong lemma, we have $\langle a, b\rangle \cong \mathbb{F}_2$, the free group of rank $2$. Therefore $\mathbb{F}_2\in \X$, violating the assumption that $\X$ was a proper variety. \\

{\bf ii}. Suppose $gL\cap L=\emptyset$. Let $p=[u, v]$ be the shortest path connecting $L$ and $gL$. 

\begin{figure}[htbp]
  \centering
  \begin{tikzpicture}[scale=1.2]
    \draw[thick] (-3, 0.9) to[out=-15, in=180] (0, 0.3) to[out=0, in=195] (3, 0.9);
    \draw[thick, dashed] (-3.5, 1.05) -- (-3, 0.9);
    \draw[thick, dashed] (3, 0.9) -- (3.5, 1.05);
    \node[right, xshift=5pt] at (3.5, 1.05) {$L$};

    \draw[thick] (-3, -0.9) to[out=15, in=180] (0, -0.3) to[out=0, in=165] (3, -0.9);
    \draw[thick, dashed] (-3.5, -1.05) -- (-3, -0.9);
    \draw[thick, dashed] (3, -0.9) -- (3.5, -1.05);
    \node[right, xshift=5pt] at (3.5, -1.05) {$gL$};

    \filldraw (0, 0.3) circle (2pt) node[above=3pt] {$u$};
    \filldraw (0, -0.3) circle (2pt) node[below=3pt] {$v$};

    \draw[thick] (0, 0.3) to[out=-75, in=75] node[right=2pt] {$p$} (0, -0.3);
  \end{tikzpicture}
  \caption{The case of no common vertex}
\end{figure}

Again, define $L^+$, $L^-$, $(gL)^+$, and $(gL)^-$ as before: $L^+=]u, +\infty[$, $L^-=]-\infty, u[$, $(gL)^+=]v, +\infty[$, and $(gL)^-=]-\infty, v[$. Suppose $a=h^m$ and $b=gh^ng^{-1}$ for sufficiently large $m$ and $n$.  Define the subtrees $T^+_a$, $T^-_a$, $T^+_b$, and $T^-_b$ as in the previous case. One similarly verifies that for every $k>0$
$$
a^k(T\setminus T^-_a)\subseteq T^+_a, \ a^{-k}(T\setminus T^+_a)\subseteq T^-_a
$$
and 
$$
b^k(T\setminus T^-_b)\subseteq T^+_b, \ b^{-k}(T\setminus T^+_b)\subseteq T^-_b.
$$
Hence the ping-pong lemma implies that $\langle a, b\rangle \cong \mathbb{F}_2$, another contradiction.\\

{\bf iii}. Suppose $gL\cap L$ is infinite. Hence, it must be an infinite ray $R$. Suppose again that $a=h^m$ and $b=gh^ng^{-1}$ for sufficiently large $m$ and $n$. Let $w$ be an arbitrary vertex of $R$, sufficiently far from the first vertex of $R$.

\begin{figure}[htbp]
  \centering
  \begin{tikzpicture}[scale=1.2]
    \draw[thick] (-3, 1.0) to[out=-35, in=180] (0, 0) to[out=0, in=200] (3, 0.4);
    \draw[thick, dashed] (-3.4, 1.2) -- (-3, 1.0);
    \node[left, yshift=3pt] at (-3.4, 1.2) {$L$};

    \draw[thick] (-2.5, -0.8) to[out=30, in=180] (0, 0);
    \draw[thick, dashed] (-2.9, -1.0) -- (-2.5, -0.8);
    \node[left, yshift=-3pt] at (-2.9, -1.0) {$gL$};

    \node[below right] at (3, 0.4) {$R$};

    \filldraw (1.5, 0.06) circle (2pt) node[above=3pt] {$w$};
  \end{tikzpicture}
  \caption{The case of infinite ray.}
\end{figure}

We have $[a, b]w=w$ so $[a, b]$ fixes all edges of $R$, and hence, Lemma \ref{fix-1} implies that $[a, b]=1$. This means that $a$ and $b$ have the same axes and consequently $gL=L$. 

Therefore, we have proved that $L$ is invariant under the action of $M$.
\end{proof}

Note that the action of $M$ on $L$ is faithful. Hence, $M$ embeds in the automorphism group $\mathrm{Aut}(L)$. Recall that every automorphism of $L$ is a translation or a reflection. But $G$ contains no involution, so every element of $M$ acts as a translation on $L$ and consequently, $M$ is an infinite cyclic group. Now, we are ready to show that $M$ is malnormal in $G$. Note that $M$ is the setwise stabilizer of $L$. To see why, let 
$$
H=\{ g\in G:\ gL=L\}.
$$
Then every element of $H$ acts on $L$ as a translation and again we conclude that $H$ is infinite cyclic. But $M\subseteq H$, so the maximality of $M$ gives $M=H$. Let $1\neq h\in M\cap M^g$ for some $g\in G$. As $h\in M$, the line $L$ is its axis. At the same time, $h\in g^{-1}Mg$ and hence, $g^{-1}L$ is its axis. Therefore, $gL=L$ which shows that $h$ belongs to the setwise stabilizer of $L$ which is $M$.

\section{Proof of Theorem B}
Let $\Gamma=G^{\ast}_{\alpha}$ and $T$ be the Bass-Serre tree of $\Gamma$. We start with the following analogue of Lemma \ref{fix-1}.

\begin{lemma}\label{fix-2}
Let $g\in \Gamma$ be a non-identity element. Then $g$ fixes at most one edge of $T$.
\end{lemma}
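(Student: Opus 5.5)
We follow the proof of Lemma \ref{fix-1}. Suppose $1\neq g$ fixes two distinct edges $e_1,e_2$. Since $\Fix_T(g)$ is a subtree, $g$ fixes the whole geodesic joining them. Hence it fixes two distinct adjacent edges sharing a common vertex. The group $\Gamma$ acts transitively on vertices, so after conjugating $g$ we may assume this common vertex is $v^{\ast}=G$. The two edges $e,f$ are then incident to $v^{\ast}$, and $g\in\Stab_\Gamma(e)\cap\Stab_\Gamma(f)$.

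By the classification of edges at $v^{\ast}$ given before the statement, each of $e,f$ is either an $A$-type edge $xe^{\ast}$ with stabilizer $xAx^{-1}$, or a $B$-type edge $ye_{\ast}$ with stabilizer $yBy^{-1}$, where $x,y\in G$. This gives three cases.

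\textbf{Case 1: both edges are $A$-type.} Write $e=xe^{\ast}$ and $f=x'e^{\ast}$. Then $g\in xAx^{-1}\cap x'Ax'^{-1}$. Conjugating by $x^{-1}$ shows that $A\cap A^{z}\neq 1$ for $z=x'^{-1}x\in G$. Malnormality of $A$ in $G$ forces $z\in A$, that is, $x'\in xA$. But then $x'e^{\ast}=xe^{\ast}$, because $A$ stabilizes $e^{\ast}$. So $e=f$, a contradiction.

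\textbf{Case 2: both edges are $B$-type.} This is identical to Case 1, using malnormality of $B$ and $\Stab_\Gamma(e_{\ast})=B$.

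\textbf{Case 3: mixed type.} Write $e=xe^{\ast}$ and $f=ye_{\ast}$. Then $1\neq g\in xAx^{-1}\cap yBy^{-1}$. Conjugating gives $A\cap B^{w}\neq 1$ for some $w\in G$, which contradicts the hypothesis that $A$ and $B$ are separated.

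The key point needing care is that equal stabilizer cosets really give equal edges. That is, in Case 1 we must check that $x'\in xA$ indeed implies $x'e^{\ast}=xe^{\ast}$, and similarly in Case 2. This follows from $\Stab_\Gamma(e^{\ast})=A$ and $\Stab_\Gamma(e_{\ast})=B$, which were computed above using Britton's lemma. With this, the argument needs only malnormality and separatedness; no other hypotheses of Theorem B are used.
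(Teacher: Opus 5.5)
Your proof is correct and takes the same route as the paper: reduce to two distinct fixed edges at $v^{\ast}$, then use malnormality of $A$ (or $B$) in $G$ when the two edges have the same type, and separatedness when they have mixed type. You are slightly more thorough than the paper, which leaves the $B$-type/$B$-type case to symmetry and does not spell out that $x_2^{-1}x_1\notin A$ follows from $e_1\neq e_2$ via $\Stab_\Gamma(e^{\ast})=A$; that fact is what the malnormality contradiction actually needs.
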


\begin{proof}
Suppose $g$ fixes two distinct edges $e_1$ and $e_2$. As in the proof of Lemma \ref{fix-1}, we may assume that $e_1$ and $e_2$ share  $v^{\ast}=G$ as a common vertex. Suppose $e_1$ is $A$-type, say $e_1=\{ G, x_1tG\}$ for some $x_1\in G$. Then,  there are two possibilities for $e_2$:\\

{\bf i}. Either $e_2$ is also $A$-type, say $e_2=\{ G, x_2tG\}$ for some $x_2\in G$. Then 
$$
g\in \Stab_{\Gamma}(e_1)\cap \Stab_{\Gamma}(e_2)=(x_1Ax_1^{-1})\cap (x_2Ax_2^{-1})
$$
and this shows that $A\cap A^{x_2^{-1}x_1}\neq \{ 1\}$, violating malnormality of $A$ in $G$. \\

{\bf ii}. Or $e_2$ is $B$-type, say $e_2=\{ G, y_1t^{-1}G\}$ for some $y_1\in G$. In this case we have
$$
g\in (x_1Ax_1^{-1})\cap (y_1By_1^{-1})
$$
which violates the assumption of separation. 
\end{proof}

\begin{lemma}
The subgroups $A$ and $B$ are malnormal in $\Gamma$. 
\end{lemma}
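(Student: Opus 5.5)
The plan is to work entirely with the Bass--Serre tree $T$ of $\Gamma$ and Lemma \ref{fix-2}, mimicking Case 1 of the proof of Theorem A. Recall that $A=\Stab_{\Gamma}(e^{\ast})$ and $B=\Stab_{\Gamma}(e_{\ast})$. For $x\in\Gamma$ we have $A^x=x^{-1}Ax=\Stab_{\Gamma}(x^{-1}e^{\ast})$, and similarly $B^x=\Stab_{\Gamma}(x^{-1}e_{\ast})$.

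First take $x\in\Gamma$ and a non-identity element $h\in A\cap A^x$. Then $h$ fixes both edges $e^{\ast}$ and $x^{-1}e^{\ast}$. By Lemma \ref{fix-2} these edges coincide, so $xe^{\ast}=e^{\ast}$ and hence $x\in\Stab_{\Gamma}(e^{\ast})=A$. Thus $A\cap A^x\neq\{1\}$ forces $x\in A$, which is exactly malnormality of $A$ in $\Gamma$. The same argument with $e_{\ast}$ in place of $e^{\ast}$ gives malnormality of $B$, using $\Stab_{\Gamma}(e_{\ast})=B$.

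The only point needing care is the identification $\Stab_{\Gamma}(x^{-1}e^{\ast})=x^{-1}\Stab_{\Gamma}(e^{\ast})x$ and the conclusion $x\in\Stab_\Gamma(e^{\ast})$ from $x^{-1}e^{\ast}=e^{\ast}$. Both are immediate for a group action. The stabilizer computation in the introduction, which used Britton's lemma to rule out edge inversion, guarantees that the setwise stabilizer of the undirected edge is exactly $A$ and not some larger group.

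There is no genuine obstacle here, since all the real work (malnormality of $A$ in $G$ and separation of $A$ and $B$) was already absorbed into Lemma \ref{fix-2}.
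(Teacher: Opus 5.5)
Your argument is correct, but it takes a different route from the paper's.

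\textbf{Your route.} You derive malnormality directly from Lemma~\ref{fix-2}. Since $A^x=\Stab_\Gamma(x^{-1}e^{\ast})$, a non-trivial element of $A\cap A^x$ fixes both edges $e^{\ast}$ and $x^{-1}e^{\ast}$. By Lemma~\ref{fix-2} these edges coincide, so $x\in\Stab_\Gamma(e^{\ast})=A$. The same argument with $e_{\ast}$ handles $B$. This is the general principle that when every non-trivial element fixes at most one edge, edge stabilizers are malnormal. There is no circularity, because Lemma~\ref{fix-2} uses only malnormality of $A,B$ in $G$ and separation.

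\textbf{The paper's route.} The paper never uses the tree here; it works with reduced words and Britton's lemma. It first shows that if $1\neq b\in B$ and $c\in\Gamma\setminus G$ satisfy $c^{-1}bc\in G$, then $c\in Bt^{-1}G$. This is done by locating where a pinch can occur and invoking malnormality in $G$ or separation at each step. It then combines this coset description with separation to exclude $c\notin G$, and finishes with malnormality of $B$ in $G$. The case of $A$ is left to symmetry.

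\textbf{Comparison.} Your proof is shorter and treats $A$ and $B$ uniformly. It also makes clear that the lemma is really a corollary of Lemma~\ref{fix-2} together with the no-inversion computation of $\Stab_\Gamma(e^{\ast})$, which you rightly flag. The paper's proof is purely algebraic and yields extra information: any element outside $G$ that conjugates a non-trivial element of $B$ back into $G$ lies in $Bt^{-1}G$.
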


\begin{proof}
Let $1\neq b\in B$ and $c\in \Gamma$ be an arbitrary element which does not belong to $G$. Suppose $c^{-1}bc\in G$. We prove that $c\in Bt^{-1}G$. Consider a reduced form of $c$:
$$
c=g_0t^{\varepsilon_1}g_1t^{\varepsilon_2}\cdots g_{n-1}t^{\varepsilon_n}g_n
$$
where $g_i\in G$ and $\varepsilon_i=\pm 1$. Then we have
$$
c^{-1}bc=g_n^{-1}t^{-\varepsilon_n}\cdots t^{-\varepsilon_1}g_0^{-1}bg_0t^{\varepsilon_1}\cdots t^{\varepsilon_n}g_n\in G.
$$
So, there must be a cancelation in the middle piece $t^{-\varepsilon_1}g_0^{-1}bg_0t^{\varepsilon_1}$. There are two cases:\\

{\bf i}. Let $\varepsilon_1=1$. In this case cancelation starts only if $g_0^{-1}bg_0\in A$. Hence $1\neq b\in B\cap g_0Ag_0^{-1}$ and this is impossible by the separation assumption.\\

{\bf ii}. Let $\varepsilon_1=-1$. Then we must have $g_0^{-1}bg_0\in B$ and the malnormality of $B$ in $G$ implies $g_0\in B$. Therefore $c=b_0t^{-1}c_1$ where $b_0\in B$ and $c_1\in \Gamma$. Put $a_0=tb_0^{-1}bb_0t^{-1}$ which is an element of $A$. Since $b\neq 1$, we have $a_0\neq 1$. The assumption $c^{-1}bc\in G$ becomes $c_1^{-1}a_0c_1\in G$. Suppose $c_1$ does not belong to $G$. Then $c_1=g_1t^{\varepsilon_2}\cdots t^{\varepsilon_n}g_n$ and once more, there are two possibilities:\\

{\bf ii-a}. If $\varepsilon_2=1$, then Britton's lemma implies $g_1^{-1}a_0g_1\in A$ in which case malnormality of $A$ in $G$ forces $g_1\in A$. But $t^{-1}g_1t$ is a piece of the reduced form of $c$; a contradiction.\\

{\bf ii-b}. If $\varepsilon_2=-1$, then $g_1^{-1}a_0g_1\in B$ and so $a_0$ belongs to the intersection of $A$ and $g_1Bg_1^{-1}$, violating separation of $A$ and $B$.\\

Hence $c_1\in G$, and consequently, $c\in Bt^{-1}G$. Now, we can prove that $B$ is malnormal in $\Gamma$. Let $c\in \Gamma\setminus G$ and $1\neq b\in B\cap cBc^{-1}$ with $b=c^{-1}b_1c$ where $b_1\in B$. Then $c=b_0t^{-1}g$ for some $b_0\in B$ and $g\in G$, as we just proved. Hence
$$
b=c^{-1}b_1c=g^{-1}tb_0^{-1}bb_0t^{-1}g\in A^g.
$$
Therefore, $b\in B\cap A^g$, violating the separation assumption. This contradiction shows that $c\in G$ and then the malnormality of $B$ in $G$ forces $c\in B$. 
\end{proof}

Now, define a homomorphism $\varepsilon: \Gamma \to \mathbb{Z}$ by extending the map $\varepsilon (x)=0$ for all $x\in G$ and $\varepsilon (t)=1$. One can verify that the kernel of $\varepsilon$ is
$$
K=\langle t^{-n}Gt^n:\ n\in \mathbb{Z}\rangle.
$$
We can describe $K$ as a chain of iterated amalgamated free products: Let $K_0=G$ and for $i>0$ define
$$
K_i=(t^iGt^{-i})\ast_{_{E_{-i}=B}}G_{i-1}\ast_{_{E_i=B}}(t^{-i}Gt^i).
$$
Here $E_i=t^{-i}At^i$, a copy of $A$ in $t^{-i}Gt^i$,  $B\subseteq G_{i-1}$, and the identification is $t^{-i}xt^i\mapsto \alpha(x)$. Then we have $K=\bigcup_{i=0}^{\infty}K_i$. Note that by Theorem A each $K_i$ is $\CSX$ and as $\CSX\subseteq\XT$, using Proposition 1.8 of \cite{Omar-Shah2}, we conclude that $K$ is $\CSX$.

\begin{lemma}
Let $M$ be a maximal $\X$-subgroup of $K$ and $g\in \Gamma$. If $M\cap M^g\neq \{ 1\}$, then $g\in K$.
\end{lemma}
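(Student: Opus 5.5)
The plan is to reduce to powers of $t$ and then rule those out by looking at the action on the Bass--Serre tree $T$ of $\Gamma$. Since $\varepsilon:\Gamma\to\mathbb{Z}$ is onto with kernel $K$ and $\varepsilon(t)=1$, every $g\in\Gamma$ can be written as $g=kt^n$ with $k\in K$ and $n=\varepsilon(g)$. Conjugation by $k$ and by $t$ are automorphisms of $K$. Hence $M_1:=M^k$ is again a maximal $\X$-subgroup of $K$, and so is $M_1^{t^n}$. Moreover $M\cap M^g\neq\{1\}$ says exactly that $M\cap M_1^{t^n}\neq\{1\}$. So it suffices to show that $M\cap M_1^{t^n}\neq\{1\}$ forces $n=0$.

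Suppose $n\neq 0$. Both $M$ and $M_1^{t^n}$ are maximal $\X$-subgroups of $K$ with nontrivial intersection. We showed above that $K$ is $\CSX$, and $\CSX\subseteq\XT$, so $\langle M, M_1^{t^n}\rangle$ is an $\X$-subgroup of $K$. Maximality then gives $M=M_1^{t^n}=M^{kt^n}$. Put $s=kt^n$. Then $s$ normalizes $M$ and $\varepsilon(s)=n\neq0$. It remains to show that a maximal $\X$-subgroup $M\neq\{1\}$ of $K$ cannot be normalized by an element $s$ with $\varepsilon(s)\neq0$.

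For this, restrict the action on $T$ to $\langle M,s\rangle$ and argue exactly as in the proof of Theorem A, using Lemma \ref{fix-2} in place of Lemma \ref{fix-1}.
\begin{itemize}
\item \emph{No element of $\langle M,s\rangle$ has a fixed vertex.} First, $s$ has no fixed vertex: vertex stabilizers are conjugates of $G\subseteq K$, while $\varepsilon(s)\neq 0$. So $s$ is hyperbolic with axis $L_s$.
\item \emph{$M$ is not elliptic.} Suppose $M$ fixed a vertex. Since each $h\neq1$ fixes at most one edge, $\Fix_T(M)$ is a single vertex or a single edge. It is $s$-invariant, because $s$ normalizes $M$. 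A hyperbolic automorphism cannot preserve a finite nonempty subtree, so this is a contradiction.
\item \emph{$M$ is infinite cyclic.} By the argument of Case 2 (Lemma \ref{Sahatt} plus the fact that $|\Fix_T(h)|\le2$ for $h\neq 1$), $M$ contains a hyperbolic element. Lemma \ref{inv-1} and the ping-pong argument then give that $M=\langle h\rangle$ is infinite cyclic and acts by translations on an axis $L$. Here we use that $\X$ is proper, $\mathbb F_2\notin\X$, and there are no involutions.
\item \emph{$s$ preserves $L$ and commutes with $h$ up to the square.} Since $s$ normalizes $M$, we have $shs^{-1}=h^{\pm1}$, whose axis is $sL$; hence $sL=L$. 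Then $s^2$ acts on $L$ as a translation, because $\Gamma$ has no involutions and a reflection squared fixes $L$ pointwise, which is impossible for $s^2\ne1$ by Lemma \ref{fix-2}. The commutator $[h,s^2]$ acts trivially on $L$ (translations of a line commute). By Lemma \ref{fix-2}, $[h,s^2]=1$.
\item \emph{Contradiction.} The group $\langle h,s^2\rangle$ is abelian and acts faithfully by translations on $L$, so it is infinite cyclic. Hence $h^a=s^{2b}$ with $b\neq 0$, because $h$ has infinite order and $s\neq 1$. Applying $\varepsilon$ gives $0=\varepsilon(h^a)=2bn\neq0$, which is absurd.
\end{itemize}
Therefore $n=0$, that is, $g\in K$.

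The step I expect to be most delicate is the analysis of $M$ on $T$ inside the larger group $\langle M,s\rangle$. There one must check that the Theorem A arguments go through verbatim for the action of $\Gamma$ on its tree: the elliptic/hyperbolic dichotomy, Lemma \ref{inv-1}, and faithfulness on the axis. This relies on Lemma \ref{fix-2} taking over the role of Lemma \ref{fix-1} and on the absence of involutions in $\Gamma$. The latter holds because every finite-order element of an HNN-extension is conjugate into $G$.
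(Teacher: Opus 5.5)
Your proposal is correct and follows the paper's argument. You use $\XT$ to get $M^g=M$, observe that $g\notin K$ has no fixed vertex, show that $M$ is infinite cyclic with an axis $L$ that $g$ must preserve, and finish by applying $\varepsilon$ inside a cyclic group of translations of $L$ containing $h$ and a power of $g$. The differences are local. You rule out a global fixed vertex of $M$ through the $s$-invariance of the finite set $\Fix_T(M)$, where the paper makes $hh^{g^j}$ hyperbolic via Lemma~\ref{Sahatt}. You work with $\langle h,s^2\rangle$ rather than the full stabilizer of $L$. The detour through $g=kt^n$ is unnecessary, since normality of $K$ gives directly that $M^g$ is maximal in $K$. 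Finally, your first bullet's heading overstates what you prove: you only show that $s$ has no fixed vertex, which is all the argument uses.
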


\begin{proof}
As $K$ is a normal subgroup of $\Gamma$, we have $M^g\subseteq K$, so both $M$ and $M^g$ are maximal $\X$-subgroups of the $\CSX$-group $K$. But $\CSX\subseteq \XT$, hence $\langle M, M^g\rangle$ must be an $\X$-subgroup, and therefore, the maximality implies $M^g=M$. Now, suppose on the contrary that $g\not\in K$. If $g$ has a fixed vertex, then $g\in G^x\subseteq K$ (for some element $x$) and this shows that $g$ has no fixed vertex. Since $g$ normalizes $M$, for every $1\neq h\in M$ and every integer $j$, we have $h^{g^j}\in M$. Recall that $\Fix_T(h)$ is finite. We assume that this set is non-empty and we choose $j$ big enough to ensure that
$$
\Fix_T(h)\cap g^{-j}\Fix_T(h)=\emptyset.
$$
But, $\Fix_T(h^{g^j})=g^{-j}\Fix_T(h)$ and hence
$$
\Fix_T(h)\cap \Fix_T(h^{g^j})=\emptyset.
$$
Now, using Lemma \ref{Sahatt} once again, the element $h^{\prime}=hh^{g^j}$ is hyperbolic. So, we can proceed as in the proof of Lemma \ref{inv-1} to show that $M$ is an infinite cyclic group. Once this is proved, one can use the argument of of the last part of the current proof to arrive at a contradiction $g\in K$.

Consequently, every non-identity element of $M$ is hyperbolic. Let $h_1, h_2\in M$ be two non-identity elements and suppose $L_1$ and $L_2$ are their corresponding axes, respectively. We prove that $L_1=L_2$. Consider the intersection $L_1\cap L_2$. There are three cases and for each these cases, one can argue as in the proof of Lemma \ref{inv-1}:\\

{\bf i}. If $L_1\cap L_2$ is a finite segment, then a suitable application of the ping-pong lemma implies that $\mathbb{F}_2\in \X$. \\

{\bf ii}. If $L_1\cap L_2=\emptyset$ then,  again using the ping-pong lemma we see that $\mathbb{F}_2\in \X$. \\

{\bf iii}. If $L_1\cap L_2$ contains an infinite ray, then as in the proof of Lemma \ref{inv-1}, we deduce that $[h_1, h_2]=1$ and therefore $L_1=L_2$.\\

Let $L=L_1$. Each element of $M$ acts on $L$ as a translation and hence $M$ embeds into $\mathrm{Aut}(L)$; as in the proof of Theorem A, the subgroup $M$ will be an infinite cyclic group. Let $h$ be a generator of $M$ and return to  $M^g=M$. The element $h^g$ has the same axis $L$, whilst on the other hand, its axis must be $g^{-1}L$. Consequently, $gL=L$ and as $g$ is hyperbolic, it is a translation of $L$. Consider the setwise stabilizer of $L$:
$$
H=\{ x\in \Gamma:\ xL=L \}. 
$$
Then $H$ embeds into $\mathrm{Aut}(L)$ and hence $H=\langle a\rangle$ is cyclic as well. We have $M\subseteq H$ and $g\in H$. So $h=a^i$ and $g=a^j$, for some integers $i$ and $j$. As $h\in M$, we have $\varepsilon(h)=0$ and hence $\varepsilon(a)=0$. This implies that $\varepsilon(g)=j\varepsilon(a)=0$ and therefore, $g\in K$.
\end{proof}

Now, we are ready to prove that $M$ is malnormal in $\Gamma$. There are two cases:\\

{\bf i}. Suppose $M$ has a fixed vertex. Then we may assume that $M\subseteq G\subseteq K$. So, $M$ is a maximal $\X$-subgroup of $K$, as well. Let $g\in \Gamma$ and $M\cap M^g\neq \{ 1\}$. Then by the above lemma, $g\in K$ and as $K$ is $\CSX$, we must have $g\in M$.\\

{\bf ii}. Suppose $M$ has no fixed vertex. Then as in the proof of Theorem A, we conclude that there is a hyperbolic element in $M$ with the axis $L$ and  this axis is invariant under the action of $M$. The same argument as in the last paragraph of the proof of Theorem A shows that $M$ is malnormal in $\Gamma$.

\section{An application}
In \cite{Omar-Shah3}, we studied split Frobenius groups which are equational domains or $\XT$-groups. The proof of Theorem B above provides us an example of a split Frobenius group with a torsion-free complement: Consider the HNN-extension $\Gamma=G^{\ast}_{\alpha}$ of the previous section. Note that $\Gamma=\langle t\rangle \ltimes K$ and as we proved, $\Gamma$ is a $\CSX$-group. We show that the action of $H=\langle t\rangle$ on $K$ is fixed-point-free and consequently $\Gamma$ is a Frobenius group. We must show that for all non-zero integers $n$, the centralizer $C_K(t^n)$ is trivial. Define a bi-infinite sequence of vertices in the tree $T$ as follows:
$$
u_0=G, \ u_i=t^iu_0=t^iG.
$$
Then $L$ below is a line and it is the axis of all non-identity elements $t^n$:
$$
L=\{ \ldots, u_{-1}, u_0, u_1, u_2, \ldots \}.
$$
Now, suppose $n\neq 0$ and $k\in K$. Suppose $t^{-n}kt^n=k$. Then $k$ commutes with $t^n$ and hence $kL=L$. There are two cases:\\

{\bf i}. If $k$ has a fixed vertex then,  it cannot be a hyperbolic element and hence the action of $k$ on $L$ is not a translation. Therefore, it has a fixed point, say $u_i$. Then for any integer $m$, we have
$$
k(t^{mn}u_i)=t^{mn}ku_i=t^{mn}u_i,
$$
and hence $k$ fixes an  infinite line, showing that $k=1$. \\

{\bf ii}. The element $k$ is hyperbolic and so, $L$ is the axis of $k$. Hence, $k$ acts as a translation of $L$ i.e. there is an integer $d$ such that $ku_i=u_{i+d}$ for all $i$. Note that $t^d$ is also a translation of $L$ by $d$ edges. Hence $k=t^d$ and consequently $0=\varepsilon(k)=d\varepsilon(t)=d$. This shows that $k=1$. \\

Summarizing, $\Gamma$ is a split Frobenius group with a torsion-free complement which belongs to the class $\CSX$.

\end{document}